\newtheorem{defn}{Definition}
\newtheorem{lem}{Lemma}
\newtheorem{theo}{Theorem}
\newcommand{\smodtitle}{mod\textsuperscript{$\star$}\xspace} 
\newcommand{\smodtext}{mod\textsuperscript{$\star$}\xspace} 
\newcommand{\smod}{\textnormal{mod}^\star}
\newcommand{\smodn}[1]{\;\;(\textnormal{mod}^\star~#1)}
\begin{document}
\title{Modified Congruence Modulo $n$ with Half the Amount of Residues}

\author{
    Gerold Br\"andli\\
    Schanzm\"attelistrasse 27\\
    5000 Aarau\\
    Switzerland\\
    braendli@hispeed.ch\\
  \and
    Tim Beyne\\
    Rotspoelstraat 15\\
    3001 Heverlee-Leuven\\
    Belgium\\
    tim.beyne@student.kuleuven.be
}

\maketitle
\begin{abstract}
    We define a new congruence relation on the set of integers, leading to a group similar to the multiplicative group of integers modulo $n$.
    It makes use of a symmetry almost omnipresent in modular multiplications and halves the number of residue classes.
    Using it, we are able to give an elegant description of some results due to Carl Schick, others are reduced to well-known theorems from algebra and number theory.
    Many concepts from number theory such as quadratic residues and primitive roots are equally applicable. It brings noticeable advantages in studying powers of odd primes, and in particular when studying semiprimes composed of a pair of related primes, e.g.\ a pair of twin primes. Artin's primitive root conjecture can be formulated in the new context. Trigonometric polynomials based on chords and related to the new congruence relation lead to new insights into the minimal polynomials of $2\cos(2\pi/n)$ and their relation to cyclotomic polynomials. 
\end{abstract}

\section{Introduction}
The motivation for this paper comes from the work of Carl Schick \cite{Sch03, Sch08, Sch13}.
In 2003, Schick found a recurrence relation \cite{Sch03} that yields, for every odd natural number $n$, a characteristic cyclic sequence of positive and negative odd integers.
\par The terms $(q_i)_{i \in \mathbb{N}}$ of this sequence are given by:
\begin{equation}
\label{eq:schickseq1}
    q_i = n - 2|q_{i - 1}|\; \text{with} \;q_1=(-1)^{(n+1)/2}.
\end{equation}
The following modified recurrence relation yields the absolute value of these terms:
\begin{equation}
\label{eq:schickseq2}
    q_i = |n - 2q_{i - 1}|\; \text{with} \;q_1=1.
\end{equation}
\par In this paper we interpret his findings in a broader context. By introducing a new congruence relation, denoted $\smod$, new insights into the work of Schick and others are gained.
\par Whereas the standard congruence relation (``mod $n$'') yields a least residue system that can be represented by even and odd nonnegative integers smaller than $n$, the proposed relation leads to a system whose elements can be represented e.g.\ by retaining only odd representatives. 
Hence, the size of the residue system is exactly halved.
Well known concepts from number theory such as ``quadratic residue'', ``multiplicative group'' and ``primitive root'' can be adapted. 
\par For a particular type of composite numbers, \smodtext leads to a multiplicative cyclic group.
As a result, new insights are gained for Sophie Germain pairs and twin primes.
\par The paper is organized as follows:
First, we recapitulate the standard knowledge in the context of mod. Then we define \smodtext and adapt well-known concepts to it.  
In particular, subsection~\ref{sec:artinsconjecture} adapts Artin's primitive root conjecture in the context of \smodtext to some composite numbers.    
Finally, section~\ref{sec:polynomials} gives a geometric interpretation of the congruence relation and closely related polynomials are constructed as an application.
\par Whereas Schick's sequences and the polynomials are defined only for odd numbers $n$, the new congruence relation $\smod$ may also be applied to even numbers.

\section{Preliminaries}
This section introduces the necessary notation (partially taken from Wikipedia \cite{WikMG}) and summarizes a number of well-known concepts from number theory.

\subsection{Multiplicative Group of Integers Modulo \emph{n}}
In number theory, the multiplicative group of integers modulo $n$ is well known and often described as follows.
\par The quotient ring $\mathbb{Z}/n\mathbb{Z}$ is defined by the following congruence relation on the ring of integers $\mathbb{Z}$:
\begin{equation}
\label{eq:modcong}
\begin{split}
    a &\equiv b \pmod n \\
    &\Updownarrow \\
    a - b &\in n\mathbb{Z} 
\end{split}
\end{equation}
where $n\mathbb{Z}$ is the ideal generated by $n$.
We denote the group of units (invertible elements) of $\mathbb{Z} /n \mathbb{Z}$ by $(\mathbb{Z} /n \mathbb{Z})^\times$.
For simplicity, we also refer to this group by $G_n$.
\par If $n$ is a power of an odd prime ($n=p^\alpha$ with $\alpha\in\mathbb{N}$), then there is the isomorphism 
\begin{equation*}
(\mathbb{Z} /p^\alpha \mathbb{Z})^\times \cong C_{\varphi(p^\alpha)},  
\end{equation*} 
where $C_m$ is the cyclic group of order $m$ and $\varphi$ is Euler's totient function.
\par In general, if $n$ is an odd composite number $n=p_1^{\alpha_1}p_2^{\alpha_2}\ldots p_l^{\alpha_l}$, then the group of units is isomorphic to the direct product of cyclic groups
\begin{equation*}
(\mathbb{Z} /n\mathbb{Z})^\times \cong (\mathbb{Z} /p_1^{\alpha_1} \mathbb{Z})^\times \times (\mathbb{Z} /p_2^{\alpha_2} \mathbb{Z})^\times\times\cdots\times (\mathbb{Z} /p_l^{\alpha_l} \mathbb{Z})^\times\cong C_{m_1} \times C_{m_2}\times\cdots\times C_{m_l} ,  
\end{equation*} 
where $m_i$ equals $\varphi(p_i^{\alpha_i})$.
\par The order $\lambda(n)$ of the largest cyclic subgroup of the group $G_n$ is given by Carmichael's function
\begin{equation*}
    \lambda(n)=\text{lcm}(m_1,m_2,\ldots,m_l).
\end{equation*}
This means that given $n$ and $a^{\lambda(n)}\equiv 1 \pmod n$ for any $a$ coprime to $n$, then $\lambda(n)$ is the smallest such exponent.  
\par The order of the group $G_n$ is $|G_n| = \varphi(n)$.
If $G_n$ is cyclic, its generators are called primitive roots modulo $n$.
Gauss \cite{Gau1801} showed that $G_n$ is cyclic (has primitive roots), if and only if $n$ is one of
\begin{equation*}
    n = 2, 4, p^\alpha~\text{or}~2p^\alpha,
\end {equation*} 
where $p$ is an odd prime and $\alpha$ a positive integer.

\subsection{Artin's Primitive Root Conjecture} 
In 1927, Artin formulated his primitive root conjecture \cite{Art65, Ramr05}.
It states that a given integer $a$ which is not a perfect square and not $-1$, 0 or 1, is a primitive root modulo infinitely many primes $p$.
If $N_a(x)$ denotes the number of such primes up to $x$ for a given integer $a$, he conjectured an asymptotic formula of the form
\begin{equation*}
N_a(x)\sim A_a\; \frac{x}{\ln x},
\end{equation*} 
as $x \to \infty$.
For the density of primitive roots $A_a$ he calculated $A_{Artin}\approx 0.3739$, independent of $a$.
\par It was later found that $A_a$ depends on $a$ (see e.g. Lenstra et al. \cite{Len14}) and it was proven by Heath-Brown \cite{Hea86} that one of $2, 3, 5$ is a primitive root modulo infinitely many primes.

\subsection{Cyclotomic Polynomials} \label{sec:cyclotomic}
The polynomial $x^n - 1$ with $n \in \mathbb{N}$ can be written as a product of
 so called cyclotomic polynomials, 
\begin{equation}
    \label{eq:prod}
    x^n - 1 = \prod_{d \mid n} \varPhi_d(x),
\end{equation}   
where $\varPhi_n(x)$ is the largest non-reducible polynomial factor of $x^n - 1$
 and is of degree $\varphi(n)$.
The M\"obius inversion formula directly leads to the expression
\begin{equation} \label{eq:minipoly}
    \varPhi_n(x) = \prod_{d \mid n}(x^d - 1)^{\mu(n/d)},
\end{equation}   
where $\mu$ is the M\"obius function.
\par An alternate definition of the cyclotomic polynomials is
\begin{equation*}
    \varPhi_n(x) = \prod_{\substack{k=1 \\ \gcd(n,k)=1}}^{n-1}(x - \xi^k),
\end{equation*}   
where $\xi^k\in \mathbb{C}$ are the roots of $x^n - 1=0$, i.e.\ the roots of unity   
\begin{equation} \label{eq:indexk}
    \xi^k=e^{2\pi ik/n}\textnormal{, where}~i^2=-1.
\end{equation} 
If $n$ is larger than 2, then they are palindromes, i.e.\ have symmetric coefficients.

\section{The Congruence Relation \smodtitle}

\subsection{Definition of \smodtitle}
In this subsection, the new congruence relation is defined.
Using this relation, the number of residue classes is halved compared to the canonical congruence relation defined in equation~\ref{eq:modcong}.
It is as versatile as mod for multiplication, but it destroys the additive structure of the ring $\mathbb Z$.
\begin{defn}
\label{defn:modstar}
Let $n$ be a positive natural number, and let $a, b \in \mathbb{Z}$ and coprime to $n$. 
Then we define a congruence relation with respect to multiplication as follows:
\begin{align*}
    a &\equiv b \smodn n \\
    &\Updownarrow \\
    a - b \in n\mathbb{Z}~&\textnormal{or}~a + b \in n\mathbb{Z}.
\end{align*}
If we define multiplication of congruence classes as $[a][b] = [ab]$, then we obtain the $\textbf{group G}_n^\star$.
\end{defn} 
Clearly, the relation in definition~\ref{defn:modstar} is an equivalence relation on the set of integers coprime to $n$: it is reflective, symmetric and transitive. Furthermore, it is compatible with multiplication and therefore a congruence relation. Note that, due to the loss of the additive structure, this does not define a new quotient ring similar to $\mathbb{Z}/n\mathbb{Z}$. Rather, it should be interpreted as a ``compression'' of the equivalence classes in $G_n$.
\par With each equivalence class in $G_n^\star$, we can associate a positive representative smaller than $n$.
These representatives form the reduced residue system \smodtext $n$. For example, if $n = 9$, we get the residue system $\{1, 5, 7 \}$ for \smodtext as opposed to $\{1, 2, 4, 5, 7, 8\}$ for mod.
The representative $\mathcal{R}(a)$ of an integer $a$ in the reduced residue system mod $n$, can easily be computed as 
\begin{equation}
    \label{eq:modsrepr}
    \mathcal{R}(a) = \begin{cases}
    \qquad a & \text{if $a$ is odd,} \\
    n - a & \text{if $a$ is even.}
  \end{cases}
\end{equation}
Of course, one could swap ``even'' and ``odd'' in the above to obtain only even representatives.
\par For computations, it is often useful to freely use representatives, and obtain the representative of choice in the final step.  
For example, one would naturally prefer 2 over $n - 2$.
\par A third option for the representatives might be  
\begin{equation}
    \mathcal{R}(a) = \begin{cases}
    \qquad a & \text{if $a < n/2$,} \\ 
    n - a & \text{if $a > n/2$.}
  \end{cases}
\end{equation}
It would be required for even $n$, because both $a$ and $n-a$ would be odd.
\par Due to the fact that numbers and their additive inverses are considered equivalent, we have the following equality regarding the order of $G_n^\star$:
\begin{equation*}
    \mid G_n^\star \mid = \frac{\mid G_n \mid}{2} = \frac{\varphi(n)}{2} 
\end{equation*}

\subsection{Comparison with the Standard Modulo}
Throughout the rest of this paper, the following lemma will be useful to answer questions about quadratic residues and primitive roots of $G_n^\star$, if $n$ is a power of an odd prime.
It is essentially a way of converting the congruence relation from definition~\ref{defn:modstar} to the canonical modular congruence relation.
\begin{lem}
    \label{lem:qrcongrelation}
    Let $n$ be a power of an odd prime and let $a, b \in \mathbb{Z}$ coprime to $n$. Then the following property holds:
    \begin{align*}
        a &\equiv b \smodn n \\
        &\Updownarrow \\
        a^2 &\equiv b^2 \pmod n
    \end{align*}
\end{lem}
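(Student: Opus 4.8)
The plan is to translate each congruence into an elementary divisibility statement and then reduce the whole equivalence to a single arithmetic fact about the prime power $n = p^\alpha$. By Definition~\ref{defn:modstar}, $a \equiv b \smodn n$ unfolds to ``$n \mid (a - b)$ or $n \mid (a + b)$'', while $a^2 \equiv b^2 \pmod n$ is simply $n \mid (a^2 - b^2)$, that is $n \mid (a - b)(a + b)$. So with $n = p^\alpha$ the lemma is equivalent to the claim
\[
    p^\alpha \mid (a - b)(a + b) \iff \bigl(\, p^\alpha \mid (a - b) \ \text{or} \ p^\alpha \mid (a + b) \,\bigr).
\]
The direction ``$\Leftarrow$'' here (which is the implication $a \equiv b \smodn n \Rightarrow a^2 \equiv b^2 \pmod n$) is immediate: if $p^\alpha$ divides one of the factors it divides their product. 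This needs no hypothesis, so all the content lies in ``$\Rightarrow$''.

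For the converse I would argue that $p$ cannot divide \emph{both} $a - b$ and $a + b$. Indeed, if it did, then $p$ would divide their sum $2a$ and their difference $2b$; since $p$ is an odd prime it is coprime to $2$, and this would force $p \mid a$ and $p \mid b$, contradicting $\gcd(a, n) = 1$. Hence at most one of the two factors is divisible by $p$, so the other factor is coprime to $p^\alpha$. Since $p^\alpha \mid (a - b)(a + b)$ and one factor contributes no powers of $p$, every factor of $p$ must come from the remaining one; that is, $p^\alpha \mid (a - b)$ or $p^\alpha \mid (a + b)$, which is exactly $a \equiv b \smodn n$.

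The main obstacle --- and the sole place where the hypotheses are actually used --- is the step establishing that $p$ divides at most one of $a \pm b$. Both assumptions enter here: if $p = 2$ one can easily have $2 \mid (a - b)$ and $2 \mid (a + b)$ simultaneously, and if $a$ or $b$ shared a factor with $n$ the contradiction would collapse. This is precisely why the statement is restricted to powers of odd primes and to $a, b$ coprime to $n$, rather than holding for an arbitrary modulus.
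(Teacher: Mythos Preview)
Your proof is correct and follows essentially the same idea as the paper's: factor $a^2 - b^2 = (a-b)(a+b)$ and argue that $p$ cannot divide both factors since otherwise $p \mid 2a$, contradicting coprimality. The only cosmetic difference is that the paper treats the prime case separately via the zero-product property in $\mathbb{Z}/p\mathbb{Z}$ before handling $\alpha > 1$, whereas you give a single unified argument for all $\alpha \ge 1$; your version is arguably cleaner for that reason.
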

\begin{proof}
Note that $a^2 - b^2 = (a - b)(a + b)$.
If $n$ is prime, the result follows from the zero product property in the field $\mathbb{Z}/n\mathbb{Z}$
 and definition~\ref{defn:modstar}.
If $n = p^\alpha$, with $\alpha > 1$ and $p$ an odd prime, the property would not hold if both $a - b$ and $a + b$ are multiples of a power of $p$ --- both are nonzero, if we exclude the trivial case $b=\pm a$, since they are coprime to $n$.
This implies that $p \mid (a - b)$ and $p \mid (a + b)$, thus $p \mid 2a$, which contradicts the assumption that $a$ is coprime to $n$.
\end{proof}
\par From the definition of $G_n^\star$, it can be seen that
\begin{equation*}
    G_n^\star \cong G_n / \langle -1 \rangle,
\end{equation*}
where $\langle -1 \rangle \subset G_n$ denotes the subgroup generated by $-1$.
\par Note that lemma~\ref{lem:qrcongrelation} is also applicable to even $n$ of the form $n=2^\alpha p^\beta$.

\subsection{Relations to Schick's Recurrence Relation}
The recurrence relation given in equation~\ref{eq:schickseq2} satisfies 
\begin{equation*}
    q_i \equiv 2^i \smodn n,
\end{equation*}
applicable only to odd $n$. 
Let $\langle 2 \rangle \subseteq G_n^\star$ denote the cyclic subgroup generated by $2$, then the representatives of the elements of this subgroup correspond to the sequence defined above.
Specifically, the sequence consists of the representatives from equation~\ref{eq:modsrepr} of $\langle 2 \rangle = \langle n - 2 \rangle$, ordered by increasing value of the exponent $i$.
Schick denotes the order of $\langle 2 \rangle$ using $\text{pes}(n)$. It is the period of the sequence in equation~\ref{eq:schickseq1} or~\ref{eq:schickseq2}. 
\par A generalization of Schick's recurrence relation is possible by the following definition.
\begin{defn}
Let $n$ be a nonnegative integer number and $g$ a positive integer less than and coprime to $n$.
Then identical sequences can be generated by
\begin{equation*}
    q_i \equiv g^i \smodn n
\end{equation*} 
or --- where the absolute value is taken $g - 1$ times --- by 
\begin{equation*}
    q_{i + 1} = | n - |n - \cdots  | n - g q_i | \cdots | | \textnormal{ with }q_0=1
\end{equation*} 
\end{defn}
The terms of this sequence correspond to the cyclic subgroup $\langle g \rangle \subseteq G_n^\star$.
An explicit, non recursive, form of any sequence of the above form can thus be obtained by using \smodtext. This allows, for example, fast calculation of the terms in such a sequence.

\subsection{Applying \smodtitle to Prime Powers} \label{subsec:primep}
As mentioned in the previous section, by applying \smodtext to an odd number $n$, the number of congruence classes is halved.
This leads to simplifications shown first for powers of an odd prime.
Begin by noting that the order of $G_n^\star$ with $n = p^\alpha$ is given by
\begin{equation*}
\mid G_n^\star \mid = \frac{\varphi(n)}{2} = \frac{(p - 1)~p^{\alpha - 1}}{2}.
\end{equation*} 
\par For a prime power $n$, $G_n$ is a cyclic group. Below, we show that $G_n^\star$ is also a cyclic group in this case.
\begin{theo}
Let $n = p^\alpha$ be the power of an odd prime, then $G_n^\star$ is a cyclic group of order $\lambda(n)/2 = \varphi(n)/2$.
\end{theo}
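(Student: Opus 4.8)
The plan is to exploit the structural isomorphism $G_n^\star \cong G_n/\langle -1 \rangle$ already recorded above, together with the classical fact that $G_n$ is cyclic whenever $n = p^\alpha$ is the power of an odd prime. Since a quotient of a cyclic group by any subgroup is again cyclic, cyclicity of $G_n^\star$ follows immediately; the only remaining work is to pin down the order.

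First I would recall that for $n = p^\alpha$ with $p$ an odd prime, Gauss's theorem (cited in the preliminaries) gives that $G_n$ is cyclic of order $\varphi(n) = (p-1)p^{\alpha-1}$, and that for such cyclic groups Carmichael's function satisfies $\lambda(n) = \varphi(n)$. Next I would observe that, because $n = p^\alpha > 2$, we have $-1 \not\equiv 1 \pmod n$, so the subgroup $\langle -1 \rangle \subseteq G_n$ has order exactly $2$. Lagrange's theorem then yields $|G_n^\star| = |G_n|/|\langle -1 \rangle| = \varphi(n)/2 = \lambda(n)/2$, matching the claimed order.

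An equivalent, more hands-on route avoids quoting the quotient isomorphism directly: fix a primitive root $g$ modulo $n$, so that $g$ generates $G_n$ and the unique element of order $2$ is $-1 \equiv g^{\varphi(n)/2} \pmod n$. Passing to $G_n^\star$, the class of $g$ satisfies $g^k \equiv 1 \smodn n$ precisely when $g^k \equiv \pm 1 \pmod n$, i.e.\ when $\varphi(n)/2 \mid k$; hence the class of $g$ has order $\varphi(n)/2$ and generates $G_n^\star$. Either argument shows $G_n^\star$ is cyclic of the stated order, and lemma~\ref{lem:qrcongrelation} could alternatively be invoked to control when $g^k \equiv \pm 1$.

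There is no serious obstacle here: the substance of the theorem is packaged into the isomorphism $G_n^\star \cong G_n/\langle -1 \rangle$ and the cyclicity of $G_n$ for odd prime powers. The only point demanding care is confirming that $\langle -1 \rangle$ contributes a factor of exactly $2$ --- which relies on $n > 2$, and is precisely why the hypothesis that $p$ is an odd prime (rather than $p = 2$, where $-1 \equiv 1 \pmod 2$) is needed.
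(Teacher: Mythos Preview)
Your proof is correct. It differs from the paper's in emphasis: the paper argues via Lemma~\ref{lem:qrcongrelation}, observing that $a^{\lambda(n)} \equiv 1 \pmod n$ for all $a$ coprime to $n$ is equivalent (by that lemma) to $a^{\lambda(n)/2} \equiv 1 \smodn n$, and then asserts that $\lambda(n)/2$ is the least such exponent by minimality of $\lambda(n)$. You instead invoke the structural isomorphism $G_n^\star \cong G_n/\langle -1 \rangle$ together with the classical cyclicity of $G_n$ for odd prime powers, so that cyclicity of $G_n^\star$ is immediate from the fact that quotients of cyclic groups are cyclic, and the order computation reduces to checking $|\langle -1 \rangle| = 2$. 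Your ``hands-on'' alternative via a primitive root $g$ is essentially the content hidden in the paper's minimality claim, made explicit. Both arguments are short; yours is slightly more self-contained in that it does not need Lemma~\ref{lem:qrcongrelation}, while the paper's route has the virtue of foregrounding the squaring relation that motivates the whole $\smod$ construction.
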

\begin{proof}
By the definition of the Carmichael function, we have for all $a \in \mathbb{Z}$ coprime to $n$ :
\begin{equation*}
    a^{\lambda(n)} \equiv 1 \pmod n
\end{equation*}
This is equivalent to (by lemma~\ref{lem:qrcongrelation}):
\begin{equation*}
    a^{\lambda(n)/2} \equiv 1 \smodn n,
\end{equation*}
where $\lambda(n) / 2$ is the smallest such exponent.
\end{proof}
\par Gauss showed that for $n = p^\alpha$, there are $\varphi(\varphi(n))$ primitive roots.
In the context of \smodtext,  $g$ is considered equivalent to its additive inverse $n - g$.
The number of primitive roots is thus $\varphi(\varphi(n)/2))$, the value of which depends on the parity of $\varphi(n)/2$. 
\begin{theo}
Let $n$ be the power of an odd prime ($n=p^\alpha$), then the average density of primitive roots in $G_n^\star$ is 50\% higher than in $G_n$.
\end{theo}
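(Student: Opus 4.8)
The plan is to compute the proportion of primitive roots in each group as an Euler product and then to average the resulting ratio over the prime $p$.

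First I would record the orders. Writing $m = \varphi(n) = (p-1)\,p^{\alpha-1}$, the group $G_n$ is cyclic of order $m$ and, by the preceding theorem, $G_n^\star$ is cyclic of order $m/2$. The number of generators of a cyclic group of order $k$ is $\varphi(k)$, so the density of primitive roots in each group is
\begin{equation*}
    \frac{\varphi(m)}{m} = \prod_{q \mid m}\left(1 - \frac{1}{q}\right)
    \qquad\text{and}\qquad
    \frac{\varphi(m/2)}{m/2} = \prod_{q \mid (m/2)}\left(1 - \frac{1}{q}\right),
\end{equation*}
the products ranging over the distinct prime divisors of $m$ and of $m/2$ respectively.

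Second, I would compare the two products by tracking only the prime $2$. Since $p$ is odd, $p^{\alpha-1}$ is odd, so the power of $2$ dividing $m$ equals the power of $2$ dividing $p-1$. If $p \equiv 3 \pmod 4$ then $p-1$ is exactly twice an odd number, so $m/2$ is odd: the factor $1 - 1/2$ occurring in the density for $G_n$ is missing from the density for $G_n^\star$, while every odd prime factor is common to both, so the density doubles, a $100\%$ increase. If instead $p \equiv 1 \pmod 4$ then $4 \mid (p-1)$, so $m/2$ is still even and has exactly the same prime divisors as $m$; the two Euler products coincide and there is no increase at all. Notice that this dichotomy matches the remark preceding the statement, namely that $\varphi(n)/2$ is odd in the first case and even in the second.

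Finally, I would average these two outcomes over $p$. Because the comparison factor is independent of the exponent $\alpha$ and depends only on the residue of $p$ modulo $4$, it suffices to invoke the equidistribution of primes in the two coprime residue classes modulo $4$: by the prime number theorem for arithmetic progressions, each of $p \equiv 1$ and $p \equiv 3 \pmod 4$ has natural density $1/2$ among the odd primes. The mean increase is thus $\tfrac12\cdot 100\% + \tfrac12\cdot 0\% = 50\%$. The main obstacle is not algebraic --- the Euler-product comparison is immediate once the factor of $2$ is isolated --- but lies in making the word \emph{average} precise: for any single prime power the statement is an exact $100\%$ or $0\%$ jump, and the $50\%$ figure is meaningful only as a mean weighted by the density of primes modulo $4$. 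I would therefore state the averaging convention explicitly and cite the relevant equidistribution result.
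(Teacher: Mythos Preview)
Your argument is correct and follows essentially the same route as the paper: split according to $p\equiv 1$ or $3\pmod 4$, observe that the density ratio $\varphi(|G_n^\star|)/|G_n^\star|$ to $\varphi(|G_n|)/|G_n|$ equals $1$ or $2$ in the two cases respectively, and then average under the assumption of equal frequencies of the two residue classes. Your version is more explicit than the paper's (Euler products to justify the factor of $2$, the prime number theorem for arithmetic progressions to justify the averaging), but the underlying idea is identical.
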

\begin{proof}
Recall that $|G_n|=\varphi(n)$ and $|G_n^*|=\varphi(n)/2$, thus 
\begin{equation*}
    \frac{\varphi(|G_n^\star|)}{|G_n^\star|} = 
    \begin{cases}   
        \;\;~\varphi(|G_n|)/|G_n| &\text{for $p=4k+1$},\\  
        2~\varphi(|G_n|)/|G_n| &\text{for $p=4k+3$}. 
    \end{cases}   
\end{equation*}
Assuming equal frequencies of the two forms of $p$ leads in the average to
\begin{equation*}
    \frac{\varphi(|G_n^\star|)}{|G_n^\star|} = f~\frac{\varphi(|G_n|)}{|G_n|}\text{ with } \bar{f}=1.5.  
\end{equation*}
\end{proof}
Artin's  primitive root conjecture may be adapted to $\smod$.
It then states that any integer $a>1$ which is not a perfect square, is a primitive root $\smod$ infinitely many primes $p$ and that the density of primitive roots converges to a constant as the number of such primes approaches infinity.
For $a=2$ in the context of $\smod$, Schick found a density of primitive roots $A_2\approx 0.561\approx 1.5\; A_{Artin}$, calculated for the primes up to 2,000,000.\cite{Sch08}

\subsection{Applying \smodtitle to Odd Composite Numbers}
This section discusses the structure of $G_n^\star$ for all odd composites $n$.
\begin{defn} \label{defn:j}
To distinguish between different cases we define the number
\begin{equation*}
    j(n) = \frac{\varphi(n)}{\lambda(n)}
      = \gcd(\varphi(p_1^{\alpha_1}), \ldots, \varphi(p_k^{\alpha_k}))
      \textnormal{~for}~n = \prod_{i = 1}^k p_i^{\alpha_i}.
\end{equation*}
This number is listed in OEIS as sequence A034380~\cite{OeisJ}.
\end{defn}
The case $j=1$ is found only for prime powers $n=p^k$.
It was shown in subsection~\ref{subsec:primep} that $G_n^\star$ is a cyclic
 group of order $\lambda(n)/2$.
\par Assume now that $n = p_1^{\alpha_1}p_2^{\alpha_2}$.
The lemma below shows that $G_n$ is a direct product of two cyclic groups.
\begin{lem} \label{lem:decompositionJ}
If $n = p_1^{\alpha_1}p_2^{\alpha_2}$ with $p_1$ and $p_2$ odd, and $j$ as in definition~\ref{defn:j}, then
\begin{equation} \label{eq:isomorphismJ}
    G_n \cong C_j \times C_{\lambda(n)}.
\end{equation}
\end{lem}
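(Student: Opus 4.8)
The plan is to reduce the statement to a purely group-theoretic fact via the Chinese Remainder Theorem, and then establish that fact by primary (prime-power) decomposition.

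First I would invoke the isomorphism already recorded in the preliminaries. Since $p_1$ and $p_2$ are distinct odd primes, $\gcd(p_1^{\alpha_1}, p_2^{\alpha_2}) = 1$, so the Chinese Remainder Theorem gives
\begin{equation*}
    G_n = (\mathbb{Z}/n\mathbb{Z})^\times \cong (\mathbb{Z}/p_1^{\alpha_1}\mathbb{Z})^\times \times (\mathbb{Z}/p_2^{\alpha_2}\mathbb{Z})^\times \cong C_{m_1} \times C_{m_2},
\end{equation*}
where $m_i = \varphi(p_i^{\alpha_i})$ and each factor is cyclic because $p_i$ is an odd prime. Since $\lambda(n) = \text{lcm}(m_1, m_2)$ and $j = \gcd(m_1, m_2)$ by definition, the lemma reduces to the claim $C_{m_1} \times C_{m_2} \cong C_{\gcd(m_1, m_2)} \times C_{\text{lcm}(m_1, m_2)}$.

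To prove this, I would pass to Sylow components. Writing $m_1 = \prod_p p^{a_p}$ and $m_2 = \prod_p p^{b_p}$ over the primes $p$ dividing $m_1 m_2$, each cyclic group splits as the product of its primary components, so
\begin{equation*}
    C_{m_1} \times C_{m_2} \cong \prod_p \left( C_{p^{a_p}} \times C_{p^{b_p}} \right).
\end{equation*}
On the other side, $\gcd(m_1, m_2) = \prod_p p^{\min(a_p, b_p)}$ and $\text{lcm}(m_1, m_2) = \prod_p p^{\max(a_p, b_p)}$, so $C_j \times C_{\lambda(n)}$ decomposes as $\prod_p \bigl( C_{p^{\min(a_p,b_p)}} \times C_{p^{\max(a_p,b_p)}} \bigr)$. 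For each fixed prime $p$ the multiset $\{a_p, b_p\}$ equals $\{\min(a_p,b_p), \max(a_p,b_p)\}$, so the two $p$-components agree factor by factor; taking the product over all $p$ yields the isomorphism.

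I do not expect a serious obstacle: the result is a standard consequence of the fundamental theorem of finite abelian groups, made transparent by the primary decomposition. The only point deserving care is that the clean two-factor form $C_j \times C_{\lambda(n)}$ is special to having exactly two prime-power factors in $n$. It is precisely the invariant-factor decomposition $C_{d_1} \times C_{d_2}$ with $d_1 \mid d_2$, where $d_1 = \gcd(m_1,m_2)$ and $d_2 = \text{lcm}(m_1,m_2)$; for three or more prime-power factors one would in general need more invariant factors, so the analogue would not collapse to a product of just two cyclic groups.
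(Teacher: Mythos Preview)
Your proof is correct. You reduce to the group-theoretic identity $C_{m_1}\times C_{m_2}\cong C_{\gcd(m_1,m_2)}\times C_{\mathrm{lcm}(m_1,m_2)}$ and verify it prime by prime via the primary decomposition of each cyclic factor; this is exactly the invariant-factor decomposition for a product of two cyclic groups, and your closing remark about why two factors are special is apt.

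The paper arrives at the same conclusion by a different packaging: it invokes Shanks' algorithm for reading off the invariant factors $f_1\mid f_2\mid\cdots\mid f_r$ from the multiset of prime powers occurring in $\varphi(p_1^{\alpha_1})$ and $\varphi(p_2^{\alpha_2})$, observes that $f_r=\lambda(n)$, and notes that after extracting the largest power of each prime, what remains is $\varphi(n)/\lambda(n)=j$ with each prime appearing at most once, so the procedure terminates with $r=2$. The underlying computation is the same as yours---taking $\max$ and $\min$ of the exponents at each prime---but the paper outsources the justification to Shanks' book, whereas your argument is self-contained and makes the mechanism explicit. Your version has the advantage of not requiring the reader to consult an external reference; the paper's version has the (minor) advantage of pointing to a named procedure that generalizes to more factors.
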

\begin{proof}
Shanks~\cite{ShanksNT} considers a special factorization $\phi_n$ of $\varphi(n)$:
\begin{equation*}
    \phi_n = p_1^{\alpha_1 - 1} p_2^{\alpha_2 - 1}
     \left( \prod_k q_{1, k}^{\beta_{1, k}}\right)
     \left( \prod_k q_{2, k}^{\beta_{2, k}}\right),
\end{equation*}
where each of the powers is written in expanded form (e.g.\ $3^2 = 9$).
For more detail, see Chapter 2, \S34 of~\cite{ShanksNT}.
This factorization can be used to decompose $G_n$ as follows:
\begin{equation*}
    G_n \cong C_{f_1} \times C_{f_2} \times \cdots \times C_{f_r}
    \text{~with}~f_1 \le f_2 \le \cdots \le f_r,
\end{equation*}
It is shown that the product of the largest power of each distinct prime in $\phi_n$
 is $f_r$. 
To obtain $f_{r - 1}$, apply the same procedure to $\phi_n/f_r$.
It is not difficult to see that $f_r = \lambda(n)$.
Since $\phi_n/f_r = j$ contains every prime power at most once, the procedure
 stops after the second step.
Hence, $r = 2$ and equation~\ref{eq:isomorphismJ} follows.
\end{proof}
The case $j = 2$ occurs only for $n$ of the same form as in the above lemma
 and $\gcd(\varphi(p_1^{\alpha_1}),\varphi(p_2^{\alpha_2}))=2$.
Since $C_2 \cong \langle -1 \rangle$, it follows from the isomorphism in
 equation~\ref{eq:isomorphismJ} that $G_n^\star$ is a cyclic
 group of order $\varphi(n)/2$.
\par For $j\ge4$ a unique cyclic group of order $\lambda(n)$ can be found in the following case. Let $n$ be of the same form as in the above lemma. If $p_1<p_2$ and $p_1^k$ (with $1\le k<\alpha_1$) is the largest power of $p_1$ dividing $p_2-1$, then one gets $j=2p_1^k$ and (\smodtext $p_1^{\alpha_1-k}p_2^{\alpha_2})$ returns the unique cyclic group. Examples are $n=63, 189, 275$. 

\subsection{Cyclicity of $G_n^\star$}
The well known theorem of Gauss \cite{Gau1801} that $G_n$ is cyclic exactly for the four forms $n=2,4,p^\alpha,2p^\alpha$ (with $p$ an odd prime and $\alpha$ a natural number),  shall now be adapted to \smodtext.
\begin{theo}
Let $G_n^\star$ be defined as above (definition~\ref{defn:modstar}) if and only if $n$ is one of the following:
\begin{equation*}
    n = \begin{cases} 
        p^\alpha~\text{or}\\
        p^{\alpha}q^{\beta}~\text{with}~\gcd\left(\varphi(p^\alpha),\varphi( q^\beta) \right)  = 2,
    \end{cases}
\end{equation*}
with $p$ and $q$ distinct odd primes and $\alpha$ and $\beta$ positive integers.
\end{theo}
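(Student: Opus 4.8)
The plan is to reduce the statement to a structural question about the finite abelian group $G_n^\star \cong G_n/\langle -1\rangle$ and to settle it by a Sylow-subgroup analysis; throughout I take $n$ odd, as is the case for every form appearing in the statement. Writing $n = \prod_{i=1}^l p_i^{\alpha_i}$ with the $p_i$ distinct odd primes, I would fix the isomorphism
\[
  G_n \cong C_{m_1} \times \cdots \times C_{m_l}, \qquad m_i = \varphi(p_i^{\alpha_i}),
\]
in which every $m_i$ is even. Under the Chinese remainder description, $-1$ is sent to the tuple $(c_1,\dots,c_l)$ whose $i$-th coordinate $c_i$ is the unique involution of $C_{m_i}$. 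The ``if'' direction is already available: for $l=1$ it is the Theorem of subsection~\ref{subsec:primep}, and for $l=2$ with $\gcd(m_1,m_2)=2$ it is the remark following Lemma~\ref{lem:decompositionJ} (the case $j=2$). So the work lies in the ``only if'' direction, for which I would invoke the standard criterion that a finite abelian group is cyclic exactly when each of its Sylow subgroups is cyclic.

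First I would dispose of the odd primes. For every odd prime $\ell$, quotienting by the order-two subgroup $\langle -1\rangle$ leaves the $\ell$-primary part untouched, so the $\ell$-Sylow subgroup of $G_n^\star$ is isomorphic to that of $G_n$, namely $\prod_i C_{\ell^{v_\ell(m_i)}}$. This is cyclic iff $\ell$ divides at most one of the $m_i$. Hence a necessary condition for cyclicity is that no odd prime divides two distinct $m_i$; equivalently, $\gcd(m_i,m_j)$ is a power of $2$ for all $i\neq j$.

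The hard part is the prime $2$, and this is where the classification is actually decided. Let $a_i = v_2(m_i)\ge 1$ and let $P \cong \prod_i C_{2^{a_i}}$ be the $2$-Sylow subgroup of $G_n$, so that the $2$-Sylow subgroup of $G_n^\star$ is $P/\langle -1\rangle$. I would measure its cyclicity through the Frattini quotient, i.e.\ the $\mathbb{F}_2$-dimension of $(P/\langle -1\rangle)\big/2(P/\langle -1\rangle) \cong P/(2P+\langle -1\rangle)$, which equals the minimal number of generators. Since $\dim_{\mathbb{F}_2}(P/2P)=l$, the key observation is that $-1 \in 2P$ iff every $a_i\ge 2$: the coordinate $c_i$ lies in $2C_{2^{a_i}}$ precisely when $a_i\ge 2$, whereas for $a_i=1$ it is a generator of $C_{2^{a_i}}=C_2$ and hence nonzero modulo $2P$. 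Consequently the minimal number of generators of $P/\langle -1\rangle$ is $l$ when all $a_i\ge 2$, and $l-1$ when some $a_i=1$ (equivalently, when some $m_i\equiv 2\pmod 4$).

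Putting the two analyses together finishes the argument. If $l\ge 3$ then the $2$-rank of $G_n^\star$ is at least $l-1\ge 2$ in either case, so $G_n^\star$ is not cyclic; thus $l\le 2$. For $l=2$, cyclicity of the $2$-Sylow forces $\min(a_1,a_2)=1$, i.e.\ $v_2(\gcd(m_1,m_2))=1$, while the odd-prime step forces $\gcd(m_1,m_2)$ to be a power of $2$; together these give exactly $\gcd(\varphi(p_1^{\alpha_1}),\varphi(p_2^{\alpha_2}))=\gcd(m_1,m_2)=2$. For $l=1$ we recover the prime-power case. These are precisely the two families in the statement. I expect the main obstacle to be the $2$-primary bookkeeping above --- in particular, pinning down that the involution $-1$ lowers the generator count by exactly one when some $m_i\equiv 2\pmod 4$ --- since every other prime contributes transparently.
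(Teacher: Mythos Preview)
Your argument is correct, but it takes a genuinely different route from the paper's. The paper disposes of the ``only if'' direction in two lines via the Carmichael exponent: since $G_n^\star = G_n/\langle -1\rangle$ is a quotient of $G_n$, its exponent divides $\lambda(n)$; hence if $n$ is not of one of the two listed forms one has $j(n)=\varphi(n)/\lambda(n)>2$, i.e.\ $|G_n^\star|=\varphi(n)/2>\lambda(n)$, so no element can have full order and $G_n^\star$ is not cyclic. The identification of the cases $j=1$ and $j=2$ with the two families is taken from the discussion preceding the theorem.

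Your Sylow-by-Sylow analysis reproves the same dichotomy from first principles. Its virtue is that it makes the obstruction explicit: the odd Sylow subgroups are untouched by quotienting out $\langle -1\rangle$, while at the prime $2$ you show precisely that the Frattini rank of the $2$-Sylow drops from $l$ to $l-1$ exactly when some $m_i\equiv 2\pmod 4$, and never further. This yields the bounds $l\le 2$ and, for $l=2$, $v_2(\gcd(m_1,m_2))=1$, without appealing to $\lambda(n)$ at all. The paper's argument is shorter and leans on the machinery ($j$, $\lambda$) already set up; yours is more self-contained and would generalise more readily to quotients by other small central subgroups, at the cost of the $2$-primary bookkeeping you flagged.
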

\begin{proof}
It was shown in the previous section that if $j=1$ and $j=2$ (definition~\ref{defn:j}),
 then $G_n^\star$ is cyclic.
Assume that $n$ is not of the above form, then $j > 2$ or equivalently
 $\varphi(n)/2 > \lambda(n)$.
In this case, $G_n^\star$ cannot be cyclic because no element is of order $\varphi(n) / 2$.
\end{proof}
\begin{defn}
Let $p$ and $q$ be distinct odd primes and $\alpha$ and $\beta$ positive integers with $\gcd((p-1)p^{\alpha-1},(q-1)q^{\beta-1})=2$. Then we denote the product of odd primes $n = p^\alpha q^\beta$  a $\textbf{cyclic semiprime}$.
\end{defn}
Ki-Suk Lee et al.\cite{SemiPrimitiveRoots, MultGroupsSemiPrimitive} introduce
 ``good semi-primitive roots''.
These correspond to the generators of $G_n^\star$, i.e.\ primitive roots \smodtext $n$.
Good semi-primitive roots are also defined for even $n$.
For all odd $n$, there is the isomorphism $G_{2n} \cong G_n$.
This allows defining $G_{2n}^\star$ in terms of $G_n^\star$.

\subsection{Cyclic Semiprimes and Artin's Conjecture}
\label{sec:artinsconjecture}
The product of two twin primes, of a pair of Sophie Germain primes and of many other pairs of primes are cyclic semiprimes.
\par We demonstrate the case of Sophie Germain prime pairs: it is well known that they are of the form $p_1=6k-1$ and $p_2=12k-1$ with $k \in \mathbb{N}$. (To eliminate in advance all $p_i$ divisible by 5, one can additionally require that $k \equiv 0,2 \text{ or } 4\pmod 5$ and so on for $7, 11\dots$)  
\par Because the group $G_n^\star$ for $n_{SG}=(6k-1)(12k-1)$ is cyclic in the context of \smodtext, we can adapt Artin's primitive root conjecture and state --- if it holds --- that a given prime $b$ is a primitive root \smodtext infinitely many cyclic semiprimes $n_{SG}$ of Sophie Germain pairs and that the density of primitive roots approaches a constant value as the number of such pairs approaches infinity. 
\par If $N_b(x)$ denotes the number of Sophie Germain primes less than $x$ for which $b$ is a primitive root $(\smod n_{SG})$, then an asymptotic formula is conjectured of the form 
\begin{equation*}
N_b(x)\sim A_b\;\int_2^x \! \frac{\text{d}x}{\ln(x)\ln(2x+1)},
\end{equation*} 
as $x$ approaches infinity. 
\par Heuristically, the density of primitive roots $A_b$ was calculated  to be in the interval (0.28, 0.47) for $x=10,000,000$ and primes $b<20$.

\subsection{Quadratic Residues and their Roots}
A difficult problem in number theory, is to find the root of a quadratic residue. Lagrange and Legendre found solutions for specific cases. In general, one has to use algorithms, such as that of M\"uller \cite{Muel04} or Tonelli-Shanks \cite{WikQuadRoot}. Using \smodtext, we found a closed-form solution one ``level'' higher than using the standard modulo. 
\par Level 1: Let $n=p^\alpha$ be an odd prime power with $\varphi(n)/2$ odd. Then every element $b\in G_n^\star$ is a quadratic residue, and 
\begin{equation} \label{eq:root}
x \equiv b^{(\varphi(n)/2+1)/2}\smodn n
\end{equation}
is a root of $b$, a solution of the equation $x^2\equiv b\smodn n$. To prove equation~\ref{eq:root} square it and remember, that $\varphi(n)/2$ is the size of the cyclic group $G_n^\star$. $x$ is itself a quadratic residue. (In the context of the standard modulo the second square root of $b$ would be $n-x$.)  

\par Level 2: Let $n=p^\alpha$ be an odd prime power or $n=p^\alpha q^\beta$ a cyclic semiprime with $j=2$,  with in either case  $\varphi(n)/4$ odd. Then $G_n^\star$ contains the subset (50\%) of all quadratic residues and the coset (50\%) of the quadratic non-residues. The subset is a cyclic group of size $\varphi(n)/4$. Let $b$ be a quadratic residue, then 
\begin{equation} \label{eq:root2}
x \equiv b^{(\varphi(n)/4+1)/2}\smodn n
\end{equation}
is a root of $b$. The proof is similar as above for level 1. $x$ is itself a quadratic residue. Multiplying the set of the quadratic residues by a primitive root yields the coset, which is not a cyclic group and which contains all primitive roots and a second root of $b$. (Equation~\ref{eq:root2} leads with the standard mod only for a few $b$-values to the correct answer.) Examples for level 2 are $n=13,77,605$ and some products of Sophie Germain pairs.

\par Level 3: Let $n=p^\alpha$ be an odd prime power or $n=p^\alpha q^\beta$ a cyclic semiprime with $j=2$,  with in either case $\varphi(n)/8$ odd. Then $G_n^\star$ contains a subset (25\%) of all biquadratic residues, the coset (25\%) of the pure quadratic residues, and the cocoset (50\%) of all quadratic non-residues. The biquadratic residues are a cyclic group of size $\varphi(n)/8$. Let $b$ be a biquadratic residue, then 
\begin{equation*}
x \equiv b^{(\varphi(n)/8+1)/2}\smodn n
\end{equation*}
is a root of $b$. $x$ is itself a biquadratic residue.  Multiplying the subset of the biquadratic residues by an appropriate element of the pure quadratic residues yields the coset, and multiplying the subset united with the coset by a primitive root yields the cocoset.  Example for level 3 are $n=41,143$.

\par Level 3 yields interesting results. Each squaring halves the number of elements. With the standard modulo, the first squaring divides the number of elements by 4 (a combination of uniting $a$ with $n-a$, $b$ with $n-b$ and squaring $a$ and $b$). But the overall picture is similar. Therefore, the real advantage of \smodtext lays in level 2.
\par One could save one loop in the Tonelli-Shanks algorithm \cite{WikQuadRoot} by using \smodtext, but the starting quadratic residue would have to be odd. It could be an advantage e.g.\ in the quadratic sieve algorithm \cite{Pom88}.

\subsection{Generalized Primitive Roots and \smodtitle}
Li and Pomerance \cite{Li02} generalize the term primitive root to arbitrary moduli and study their density. If \smodtext is applied in this context, one finds at most half as many generalized primitive roots. 
\par Our paper is focused on odd numbers. However, \smodtext may also be applied to even numbers. (Note, the representatives from equation~\ref{eq:modsrepr} have to be chosen differently, e.g. $<n/2$, instead of odd or even.)
If $g$ is a generalized primitive root, i.e.\ has order $\lambda(n)$, then $n-g$ is also a generalized primitive root and \smodtext unites $g$ and $n-g$.
Additional generalized primitive roots may disappear, if $\lambda(n)/2$ is odd.
Specifically, for the ratio $r$ of generalized primitive roots mod $n$ to generalized primitive roots \smodtext $n$, one finds the following.
\par Let $n$ be a positive composite number (even or odd) of the form $n=p_1^{\alpha_1}p_2^{\alpha_2}p_3^{\alpha_3}\ldots p_l^{\alpha_l}$ with each $\varphi(p_i^{\alpha_i})/2$ an odd number larger than 1, then those generalized primitive roots $g$ with $g^{\lambda(n)/2}\equiv -1 \pmod n$ have the halved order $\lambda(n)/2$  (\smodtext $n$) and are no longer generalized primitive roots. 
\par In the terminology of Li and Pomerance (\cite{Li02}, page 3), one has $p=2$ and $\nu_2=l$ and finds the ratio
\begin{equation*}
     r = \begin{cases}
            2 &\text{if $\lambda(n)/2$ is even}\\
           2 \frac{1}{2^{l-1}-1}     &\text{otherwise} 
     \end{cases}
\end{equation*}
The second case with ratio $r=2\frac{1}{3}$ occurs e.g.\ for $n=84$ and $n=231$.

\section{Polynomials Related to Odd Integers}
\label{sec:polynomials}

\subsection{Definition of Polynomials Based on Chords}
Three distinct polynomials are defined: the polynomials $S_k(s)$ relating other chords to an arbitrarily selected first one $s$, the polynomials $P_m(s)$ comprising all chords related to an odd number $n=2m+1$, and the polynomials $\varPsi_n(s)$, the largest irreducible factor of $P_m(s)$.
\paragraph{The polynomials $\textbf{S}_k(s)$}
We start by using the symmetry of the cyclotomic polynomial (see subsection~\ref{sec:cyclotomic}) to combine two variables into one, halving the degree.
\begin{defn}
Let $n$ be an odd positive integer and $x$ a point on the unit circle in the complex plane.
Then $x$ and $x^{-1}$ are complex conjugates and we define the real variable 
\begin{equation*}
    s = x + x^{-1},
\end{equation*}
and the polynomial $S_k(s)$ for powers of $x$ as  
\begin{equation*} \label{eq:subst}
    S_k(s)=x^k+x^{-k},
\end{equation*}
closely related to the Chebyshev polynomials of the first kind $T_k$
 (found e.g.\ in~\cite{WolfCheby}), namely
\begin{equation*}
    S_k(s)=2 \; T_k(s/2).
\end{equation*}
\end{defn}  
Note that $S_k$ is a chord of the unit circle.
The first few chords are 
\begin{equation*}
  S_0=2,\quad S_1=s,\quad S_2=s^2-2,\quad S_3=s^3-3s,
\end{equation*}
with the recurrence relation
\begin{equation*}
  S_n=s\cdot S_{n-1}-S_{n-2}\quad\textnormal{or in general}\quad S_n=S_k\cdot S_{n-k}-S_{n-2k} 
\end{equation*}
and the explicit formula
\begin{equation*}
    S_k(s) =\frac{(s+\sqrt{s^2-4})^k+(s-\sqrt{s^2-4})^k}{2^k}
\end{equation*}
\par The polynomials exhibit a weighted orthogonality
\begin{equation*}
    \int_{-2}^{2} \frac{S_k(s)S_l(s)}{\sqrt{4-s^2}}ds = \begin{cases}
    0 & \text{if }k\not =l,\\
    2\pi & \text{if }k=l \not =0,\\
    4\pi & \text{if $k=l=0$,}
    \end{cases}
\end{equation*}
and a nesting property
\begin{equation*}
      S_k\big(S_l(s)\big)=S_{kl}(s). 
\end{equation*}
The polynomials $S_k(s)$ are identical to the Dickson polynomials of the first kind $D_n(x,\alpha)$ with $\alpha=1$ introduced by L. E. Dickson in 1897  (found e.g.\ in~\cite{WikDick}).
\paragraph{The polynomials $\textbf{P}_m(s)$}
Consider the polynomial $x^n - 1$ and factor out the real root $x = 1$ to get 
$(x^n-1)/(x-1)=1+\sum_{k=1}^{n-1}x^k$.
If $x$ is a root of unity, the sum is the well known Gauss sum.
Below, we define a polynomial $P_m$ based on this sum.
\begin{defn} \label{defn:pm}
Let $n$ be an odd integer.
By replacing $x^k + x^{-k}$ with $S_k(s)$ in the Gauss sum, a polynomial $P_m$
 is obtained:
\begin{equation*}
    P_m(s) = 1 + \sum_{k=1}^{m}S_k(s)\textnormal{, where}~m=\frac{n-1}{2}. 
\end{equation*}
\end{defn}
The first four polynomials are
\begin{equation*}
    P_0=1,\quad P_1=s+1,\quad P_2=s^2+s-1,\quad P_3=s^3+s^2-2s-1,
\end{equation*}   
with the recurrence relation 
\begin{equation*}
    P_m=s\cdot P_{m-1}-P_{m-2}\quad\textnormal{or in general}\quad P_m=S_k\cdot P_{m-k}-P_{m-2k},
\end{equation*}
and the explicit formula
\begin{equation} 
   \label{eq:explicit}
    P_m=\sum_{k=0}^{m}(-1)^i \left (\begin{matrix}i+k \\k\end{matrix}\right )
    s^k\text{, where}~i=\left\lfloor\frac{m-k}{2}\right\rfloor.
\end{equation}
From definition~\ref{defn:pm}, it follows that
\begin{equation*}
    P_{m}(\xi^k + \xi^{-k}) = P_m(2\cos(2\pi k/n)) = 0.
\end{equation*}
\paragraph{The polynomials $\varPsi_n(s)$}
The largest irreducible factor of $P_m$ will be the minimal
 polynomial of $2\cos(2\pi k/n)$ with $k$ coprime to $n$, i.e.\ the primitive
 roots of unity.
This polynomial will be denoted by $\varPsi_n(s)$.
It is clear that the polynomial $P_m$ equals the product of the minimal polynomials
 of the divisors of $n$.
This leads to the theorem below.
\begin{theo} \label{theo:minimalPolynomial}
Let $n$ be an odd integer, then the minimal polynomial $\varPsi_n$ is
\begin{equation}
    \varPsi_n(s)=\prod_{d \mid n} \left(P_{(d-1)/2}(s)\right)^{\mu(n/d)},
\end{equation}
of degree $\varphi(n)/2$.
\end{theo}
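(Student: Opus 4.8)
The plan is to first prove the factorization that the text calls ``clear'' --- namely that $P_{(n-1)/2}$ is the product, over the divisors $d\mid n$, of the minimal polynomials of $2\cos(2\pi/d)$ --- and then to extract $\varPsi_n$ from it by the very same multiplicative M\"obius inversion that turns equation~\ref{eq:prod} into equation~\ref{eq:minipoly}. The starting point is the Gauss-sum identity behind definition~\ref{defn:pm}: writing $s=x+x^{-1}$ and $m=(n-1)/2$, so that $n=2m+1$,
\[
P_m(s)=1+\sum_{k=1}^{m}\bigl(x^{k}+x^{-k}\bigr)=\sum_{j=-m}^{m}x^{j}=x^{-m}\,\frac{x^{n}-1}{x-1}.
\]

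Next I would invoke the cyclotomic factorization. Since $\varPhi_1(x)=x-1$, equation~\ref{eq:prod} gives $(x^{n}-1)/(x-1)=\prod_{d\mid n,\,d>1}\varPhi_d(x)$. For each odd $d>1$ the polynomial $\varPhi_d$ is palindromic of even degree $\varphi(d)$ (noted in subsection~\ref{sec:cyclotomic}), so it admits a unique reciprocal substitution $\varPhi_d(x)=x^{\varphi(d)/2}\,\varPsi_d(x+x^{-1})$ with $\varPsi_d$ monic of degree $\varphi(d)/2$; the roots of $\varPsi_d$ in the variable $s$ are the images $x+x^{-1}$ of the primitive $d$-th roots of unity, i.e.\ the numbers $2\cos(2\pi j/d)$ with $\gcd(j,d)=1$, so $\varPsi_d$ is exactly the minimal polynomial in the statement. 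Substituting this into the product and collecting the powers of $x$ via $\sum_{d\mid n,\,d>1}\varphi(d)/2=(n-1)/2=m$, the prefactor $x^{-m}$ cancels and I obtain
\[
P_{(n-1)/2}(s)=\prod_{d\mid n}\varPsi_d(s),
\]
where the convention $\varPsi_1(s)=P_0(s)=1$ absorbs the excluded divisor $d=1$ (whose only associated value $s=2$ was removed together with the factor $x-1$).

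This displayed relation has precisely the multiplicative form $f(n)=\prod_{d\mid n}g(d)$ with $f(d)=P_{(d-1)/2}$ and $g(d)=\varPsi_d$, read inside the multiplicative group of nonzero rational functions over $\mathbb{Q}$. Applying multiplicative M\"obius inversion --- the step that produces equation~\ref{eq:minipoly} --- then yields
\[
\varPsi_n(s)=\prod_{d\mid n}\bigl(P_{(d-1)/2}(s)\bigr)^{\mu(n/d)},
\]
which is the claimed formula. A degree count using $\deg P_{(d-1)/2}=(d-1)/2$ (from equation~\ref{eq:explicit}) together with $\sum_{d\mid n}\mu(n/d)\,d=\varphi(n)$ and $\sum_{d\mid n}\mu(n/d)=0$ for $n>1$ confirms $\deg\varPsi_n=\varphi(n)/2$.

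I expect the main obstacle to be the claim that the reciprocal substitute $\varPsi_d$ is \emph{genuinely} the minimal polynomial of $2\cos(2\pi/d)$, that is, that it is irreducible of degree $\varphi(d)/2$ over $\mathbb{Q}$; this is what licenses calling $\varPsi_n$ the minimal polynomial and makes the M\"obius inversion meaningful rather than merely formal. It is equivalent to $[\mathbb{Q}(2\cos(2\pi/d)):\mathbb{Q}]=\varphi(d)/2$, i.e.\ that $\mathbb{Q}(\cos(2\pi/d))$ is the index-two maximal real subfield of the cyclotomic field, and it rests on the irreducibility of $\varPhi_d$ plus the observation that complex conjugation is the nontrivial automorphism fixing $x+x^{-1}$. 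Once this irreducibility and degree fact is granted, the Gauss-sum identity, the palindrome substitution, and the inversion are all routine.
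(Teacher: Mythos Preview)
Your proposal is correct and follows essentially the same route as the paper: establish the factorization $P_{(n-1)/2}(s)=\prod_{d\mid n}\varPsi_d(s)$ and then apply multiplicative M\"obius inversion, with the degree read off from $\sum_{d\mid n}\mu(n/d)(d-1)/2=\varphi(n)/2$. The only difference is one of completeness: the paper asserts the factorization as ``clear'' and takes the irreducibility of $\varPsi_d$ for granted, whereas you actually derive the factorization from the cyclotomic identity $\varPhi_d(x)=x^{\varphi(d)/2}\varPsi_d(x+x^{-1})$ and flag the irreducibility (equivalently, $[\mathbb{Q}(\cos(2\pi/d)):\mathbb{Q}]=\varphi(d)/2$) as the genuine content behind calling $\varPsi_n$ a minimal polynomial.
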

\begin{proof}
This follows directly by applying the M\"obius inversion formula to
\begin{equation*}
    P_{(n - 1) / 2}(s) = \prod_{d \mid n} \varPsi_d(s).
\end{equation*}
Since $P_{(n - 1)/2}$ is of degree $(n - 1) / 2$, the degree of $\varPsi_n$
 can be computed as the inverse-M\"obius transform of the following well-known
 identity:
\begin{equation*}
    \frac{n - 1}{2} = \sum_{d \mid n} \frac{\varphi(d)}{2}
    \Rightarrow
    \frac{\varphi(n)}{2} = \sum_{d \mid n} \mu\left(\frac{n}{d}\right)\frac{d - 1}{2}
    = \deg{\varPsi_n}.
\end{equation*}
\end{proof}
D. Surowski and P. McCombs~\cite{SurowskiMinPoly} give a different, more complex,
 formula for the minimal polynomial of $2\cos(2\pi/p)$ for an odd prime $p$.
One can verify that their expression yields the same polynomials as the the explicit
 formula for $\varPsi_p(s) = P_m(s)$ from equation~\ref{eq:explicit}.
Note the similarity between theorem~\ref{theo:minimalPolynomial} and
 equation~\ref{eq:minipoly}.
Both are of course closely related, since we have the following relation between
    $\varPsi_n(s)$ and $\varPhi_n(x)$:
\begin{equation} \label{eq:cyclotomic}
    \varPhi_n(x) = x^{\varphi(n)/2}\cdot\varPsi_n\left(x + \frac{1}{x}\right).
\end{equation}

\subsection{The Roots}
We have seen that the variable $s$ can be understood as a chord of the unit circle.
The roots of $P_{(n-1)/2}(s)$ can be understood as the diagonals
 of a regular $2n$-gon inscribed in the unit circle.
We denote the roots by $\sigma$.
They may have a positive or a negative sign.
Each root (including the sign) appears exactly twice in the set
\begin{equation*}
    \{2\cos(2\pi k / n) \text{ with } k = 1, 2, 3, \ldots, n - 1\}.
\end{equation*}
\par We have many choices in selecting a subset of representatives for $k$, and we are completely free in ordering and numbering the representatives.
Our preferred index transformation with respect to the numbering of $k={1,2,3,\ldots,n-1}$ is $j=|n-4k|$.
It returns only odd indices and begins at $j=1$ for $k=1$ by the shortest chord in ascending order.
Another choice is $j=n-|n-4k|$, yielding only even indices and beginning at $j=n-1$ for $k=1$ by the longest chord in descending order.
The third line in the next definition indicates that combinations of odd and even indices are also possible.

\begin{defn} \label{def:altexpression}
Let $n$ be a positive odd integer.
Then a complete representative set of chords of size $(n-1)/2$ related to
 the number $n$ is defined by one of the three lines
\begin{equation*} \label{eq:altexpression}
    \sigma_j = \begin{cases}       
        (-1)^{(n-j)/2}\;2\sin\Big(\frac{\pi}{2}\frac{j}{n}\Big)
        &\textnormal{if}~j\in\{1,3,\ldots,n-2(,n)\},\\
        (-1)^{j/2}\;2\cos\Big(\frac{\pi}{2}\frac{j}{n}\Big)
        & \textnormal{if}~j\in\{(0,)2,4,\ldots,n-1\}, \\
        mixed
        &\textnormal{if}~j\in\{(0,)1,2, \ldots,\frac{n-1}{2}\}.
    \end{cases}
\end{equation*}
The zero element, which is not a root of a polynomial, has been included
 in round brackets as it will prove to be useful in chord arithmetics ($\sigma_0=\sigma_n=2$).  
\end{defn} 
\par The periodicity of trigonometric functions and the symmetry between sine and cosine --- mirrored at $\pi/4$ --- leads to the congruence relation of this paper. Therefore, chords are related as follows:
\begin{align*}
    j &\equiv i \smodn n \\
    &\Updownarrow \\
    \sigma_j &= \sigma_i.
\end{align*}
Note, so far we have only considered the polynomials $P_{(n-1)/2}(s)$, where the index $j$ need not be coprime to $n$. The above holds nevertheless, but the set of the related congruence classes with multiplication is not a group. If solely the roots $\sigma_j$ with $j$ coprime to $n$ are considered, one gets to the minimal polynomials $\varPsi_n(s)$  and can write
\begin{equation*}
    \varPsi_n(s)=\prod_{\substack{j=1 \\ \gcd(n,j)=1}}^{(n-1)/2}(s-\sigma_j).
\end{equation*}   
All the polynomials over the integers considered in this section --- $x^n-1$,\\*$\varPhi_n(x), P_{(n-1)/2}(s)$ and $\varPsi_n(s)$ --- are monic and the value of their last coefficient is $\pm1$.
Because this constant is the product of the polynomial's roots, we get
\begin{equation*}
    \Bigg|\prod_{all\; roots}\sigma_j\Bigg|=1,
\end{equation*}  
which holds for the roots of $\varPsi_n(s)$ and $P_{(n-1)/2}(s)$.
\par A resume: the roots of the polynomials $x^n-1$ and $\varPhi_n(x)$ are roots of unity, they are all but one complex and $n$ may be even. The roots of the polynomials $P_{(n-1)/2}(s)$ and $\varPsi_n(s)$ are chords of the $2n$-gon, are real and $n$ must be odd. The polynomials $\varPhi_n(x)$ and $\varPsi_n(s)$ are the minimal polynomials in each case.

\subsection{Chord Arithmetic}
The chords $\sigma_j$ from definition~\ref{eq:altexpression} combine sine and cosine functions, use their periodicity and symmetry, and map these properties to the index number $j$. This subsection demonstrates that the arithmetic of chords becomes an arithmetic of index numbers.
\par Let $n$ be an odd positive integer and $\sigma_i$ and $\sigma_j$ chords related to $n$, then the following equation holds:
\begin{equation}
    \label{eq:chordproduct}
    \sigma_i \sigma_j=\sigma_{i+j}+\sigma_{i-j},
\end{equation}  
where the index numbers $i+j$ and $i-j$ are the residues (\smodtext $n$). In the case $i=j$ an additional chord is found  $\sigma_0=\sigma_n=2$, the diameter of the unit circle.
\par Equation~\ref{eq:chordproduct} follows readily from the equation
\begin{equation*}
2\sin(\alpha)\sin(\beta)=\sin(\alpha+\beta)+\sin(\alpha-\beta).
\end{equation*}   

\paragraph{Examples.}
The use of the above rule is demonstrated for two examples, $P_3$ and $P_6$.
In accordance with the fundamental theorem of algebra, all coefficients of the polynomial $P_m$, except the first one, are composed of products and sums of chords.
Products of chords can be transformed into sums of chords by equation~\ref{eq:chordproduct}:
\begin{equation*}
      P_3=\prod_{j=1}^{3}(s-\sigma_j)=s^3-(\sigma_1+\sigma_2+\sigma_3)s^2+(\sigma_1\sigma_2+\sigma_1\sigma_3+\sigma_2\sigma_3)s-\sigma_1\sigma_2\sigma_3
\end{equation*}   
For the Gauss sum $\sigma_1+\sigma_2+\sigma_3$ the result is known, it is $-1$.
Applying~\ref{eq:chordproduct} to the third term and choosing the appropriate representative $j \in \{1,2,3\}$, yields: 
\begin{equation*}
       \sigma_1\sigma_2+\sigma_1\sigma_3+\sigma_2\sigma_3=\sigma_3+\sigma_1+\sigma_4+\sigma_2+\sigma_5+\sigma_1 = 2(\sigma_1+\sigma_2+\sigma_3)=-2.
\end{equation*}   
To the last term $\sigma_1\sigma_2\sigma_3$, equation~\ref{eq:chordproduct} is applied sequentially.
\begin{equation*}
      \sigma_1\sigma_2\sigma_3=(\sigma_3+\sigma_1)\sigma_3=\sigma_6+\sigma_0+\sigma_4+\sigma_2 = \sigma_1+2+\sigma_3+\sigma_2=1.
\end{equation*}   
Substituting these results into the expression above for the polynomial yields
\begin{equation*}
P_3=s^3+s^2-2s+1.
\end{equation*}   
\par Polynomials $P_m$ for primes or prime powers of the form $n=4k+1$ can be factored as demonstrated here for $n=13$: 
\begin{equation*}
     P_6=s^6+s^5-5s^4-4s^3+6s^2+3s-1=(s^3+c_1 s^2-s-1-c_1)(s^3+c_2 s^2-s-1-c_2),
\end{equation*}
where $c_1=\sigma_1+\sigma_3+\sigma_9=\frac{1-\sqrt{13}}{2}$ and $c_2=\sigma_5+\sigma_7+\sigma_{11}=\frac{1+\sqrt{13}}{2}$. 
\par In this example the representatives are chosen differently, namely $j\in\{1,3,5,7,9,11\}$. The numbers 1, 3, and 9 are the primitive roots in $G_n^\star$. The value of $c_1$ can be deduced from the Gauss sum \cite{Gau1801} of all quadratic residues in $G_n$, that is $\sqrt{13}$.
\par The following more general formula may be used to transform a product of chords into a sum of chords. Let $n$ be an odd positive integer and $\sigma_{j_k}$ with $k=\{1,2,3,\ldots,m\}$ a bunch of not necessarily distinct chords related to $n$. Then the formula is 
\begin{equation*}
     \prod_{k=1}^{m}\sigma_{j_k}= \sum_{l=1}^{2^{m-1}}\sigma_{i_l},
\end{equation*}  
where the index numbers $i_l$ are the $2^{m-1}$ distinct combinations of $\pm$ signs in the next line
\begin{equation*}
    i_l\equiv j_1\pm j_2\pm j_3\pm\ldots\pm j_m\; (\smod n).
\end{equation*} 
Clearly, on the right side of the equation there are more terms than on the left side and one can expect, that many $i_l$ values are identical.  

\subsection{Geometric Interpretation of the Chords} 
\begin{figure}[h!]
    \centering
    \includegraphics[trim=0 75mm 0 70mm, clip=true,width=\textwidth]{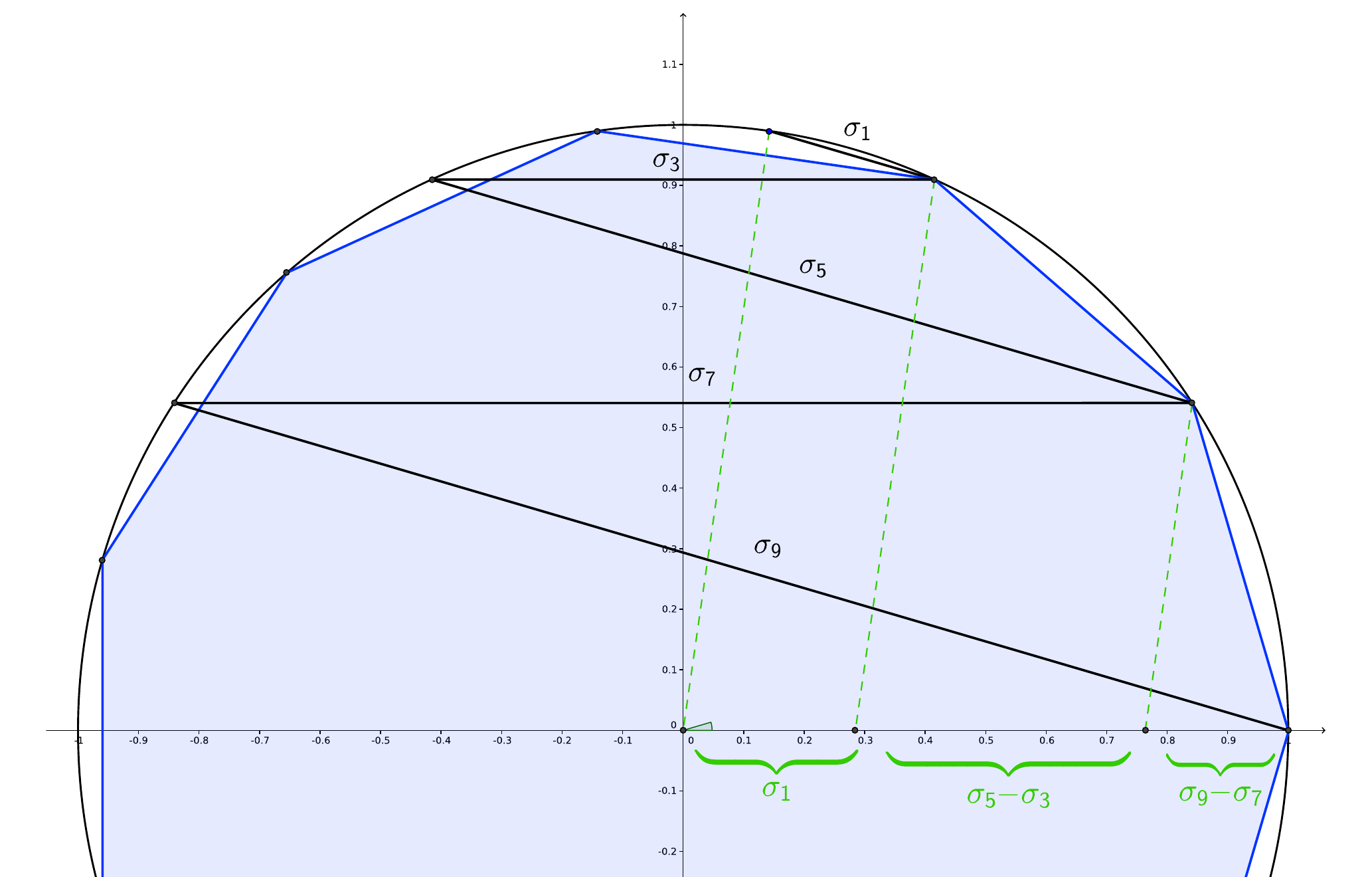}
    \caption{Chords associated with $n = 11$.}
    \label{fig:chords11}
\end{figure}
Figure~\ref{fig:chords11} shows the upper half of a polygon with 11 corners inscribed in a unit circle.
Additionally, some corners of an 22-gon are marked.
Five chords $\sigma_1, \sigma_3, \sigma_5, \sigma_7$ and $\sigma_9$ are associated with the number 11.
They are diagonals or a side of the 22-gon and may be drawn at several positions.
Here they are drawn to demonstrate the sum rule.
The angles between two intersecting chords is the constant $\arcsin(\sigma_1)=\pi/n$.
The chords $\sigma_1, \sigma_5$ and $\sigma_9$ have a negative sign (definition~\ref{def:altexpression}).
The green lines demonstrate that $-\sigma_9+\sigma_7-\sigma_5+\sigma_3-\sigma_1=-1$, the Gauss sum.\\
The figure also demonstrates the following properties:
\begin{itemize}
    \item The equality $\sigma_j=\sigma_{n-j}$, by mirroring around the line $y = x$.
    \item The recurrence relation $\sigma_j=\sigma_1\cdot \sigma_{j-1}-\sigma_{j-2}$.
    \item The multiplication of chords (equation~\ref{eq:chordproduct}).
    \item Schick's geometric construction to get his original sequence (equation~\ref{eq:schickseq1}).
\end{itemize}

\section{Conclusions}
We have defined a new congruence relation that can be used to study the behavior of the sequence given by Schick and other, similar, sequences.
\par By defining \smodtext, we can simplify several aspects of Schick's work.
Furthermore, the multiplicative group of integers \smodtext $n$ has a number of properties that are interesting by themselves.
In particular, \smodtext yields relatively more quadratic residues and the process of finding square roots is simplified.
For some special composite numbers, the multiplicative group of integers \smodtext $n$ is cyclic.
In this case, one can define "primitive roots" and adapt Artin's primitive root conjecture.
Examples of such composites are Sophie Germain and twin prime pairs.
\par Finally, polynomials related to odd integers and \smodtext are introduced.
These lead to a simple expression for the minimal polynomial of $2\cos(2\pi/n)$, where $n$ is odd. 

\section{Acknowledgement}
Carl Schick has given us the basic ideas to start this work and he contributed in many discussions regarding its development. Kfir Barhum was a big help in writing a mathematically sound paper. Further, we have to thank Alan Szepieniec, Hans Bachofner, Hans Heiner Storrer, Fritz Gassmann and Juraj Hromkovic for reading a draft and giving us valuable advice in proceeding our work. Hans Heiner Storrer found two errors in the first version of this paper and made us attentive to the work of Ki-Suk Lee et al..

\nocite{*}
\bibliographystyle{plain}  
\bibliography{modified_congruence}

\end{document}